\theoremstyle{plain}
\newtheorem{thm}{Theorem}[section]
\newtheorem{cor}[thm]{Corollary}
\newtheorem{lem}[thm]{Lemma}
\begin{document}

\title{On the number of conjugacy classes of a permutation group}

\author{Martino Garonzi} \address{Dipartimento di Matematica,
Universit\`a degli studi di Padova, Via Trieste 63, 35100
Padova, Italy} \email{mgaronzi@math.unipd.it}

\author{Attila Mar\'oti} \address{Fachbereich
Mathematik, Universit\"{a}t Kaiserslautern, Postfach 3049, 67653
Kaiserslautern, Germany} \email{maroti@mathematik.uni-kl.de}

\keywords{permutation group, number of conjugacy classes}
\subjclass[2000]{20C99}
\thanks{The first author wishes to thank the hospitality and financial support of Technische Universit{\"a}t Kaiserslautern where much of this work was carried out. The research of the second author was supported
by an Alexander von Humboldt Fellowship for Experienced
Researchers and by OTKA K84233.}
\date{\today}

\begin{abstract}
We prove that any permutation group of degree $n \geq 4$ has at most $5^{(n-1)/3}$ conjugacy classes.
\end{abstract}

\maketitle

\section{Introduction}

One of the most important invariants of a finite group $G$ is the number $k(G)$ of its conjugacy classes. This is equal to the number of complex irreducible characters of $G$. There are many interesting open problems concerning $k(G)$. For example it is not known whether there exists a universal constant $c >0$ so that $k(G) > c \log (|G|)$ holds for any finite group $G$ (see \cite{P}, \cite{K}, \cite{J}). In this paper we are interested in giving upper bounds for $k(G)$. Such problems are closely related to the $k(GV)$ theorem (see \cite{schmidbook}) and the non-coprime $k(GV)$ problem \cite{GT}.

One of the important special cases in giving upper bounds for $k(G)$ is the case when $G$ is a permutation group of degree $n$. Kov\'acs and Robinson \cite{KR} proved that $k(G) \leq 5^{n-1}$ and reduced the proposed bound of $k(G) \leq 2^{n-1}$ to the case when $G$ is an almost simple group. This latter bound was later proved by Liebeck and Pyber in \cite{LP} for arbitrary finite groups $G$. Kov\'acs and Robinson in \cite{KR} also proved that $k(G) \leq 3^{(n-1)/2}$ for $G$ a solvable permutation group of degree $n \geq 3$. Later Riese and Schmid \cite{RS} proved the same bound for $3'$, $5'$ and $7'$-groups, and in \cite{Ma2} the second author obtained the bound $k(G) \leq 3^{(n-1)/2}$ for an arbitrary finite permutation group $G$ of degree $n \geq 3$. 

By imposing restrictions on the set of composition factors of the permutation group $G$, one can obtain stronger bounds on $k(G)$. For example, in \cite{Ma2} it was shown that $k(G) \leq {(5/3)}^{n}$ whenever $G$ has no composition factor isomorphic to $C_{2}$, and more recently Schmid \cite{schmid} proved that $k(G) \leq 7^{(n-1)/4}$ for $n \geq 5$ where $G$ has no non-abelian composition factor isomorphic to an alternating group or a group in \cite{ATLAS}. However it seems hard to generalize these bounds for arbitrary groups. 

The main result of the current paper is the following.

\begin{thm}
\label{main}
A permutation group of degree $n \geq 4$ has at most $5^{(n-1)/3}$ conjugacy classes.
\end{thm}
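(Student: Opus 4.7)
The plan is to argue by induction on the degree $n$. Base cases $n \in \{4, 5, 6, 7\}$ are verified directly by inspecting the (finitely many) permutation groups of those degrees; in particular $\sym{4}$ already achieves equality with $k(\sym{4}) = 5 = 5^{(4-1)/3}$. For the inductive step, assume $G \leq \sym{n}$ with $n$ beyond the base range, and split into three structural cases.

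\emph{Intransitive case.} If $G$ has orbits $\Omega_1, \dots, \Omega_r$ with $r \geq 2$, the embedding $G \hookrightarrow \prod_i G^{\Omega_i}$ yields $k(G) \leq \prod_i k(G^{\Omega_i})$. Let $a, b, c$ denote the numbers of orbits of sizes $1, 2, 3$ and $r_0$ the number of orbits of size $\geq 4$. Applying induction to the large orbits and the trivial bounds $k \leq 1, 2, 3$ to the small ones, the target estimate reduces to the arithmetic inequality
\[
a + r_0 + (2 - 3\log_5 2)\,b + (3 - 3\log_5 3)\,c \geq 1.
\]
Both bracketed coefficients are positive ($\approx 0.71$ and $\approx 0.95$); since $r = a + b + c + r_0 \geq 2$ and $n \geq 4$, a short case check on $(a, b, c, r_0)$ confirms the inequality.

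\emph{Imprimitive transitive case.} Choose a nontrivial block system with $m$ blocks of size $k$, $mk = n$. Let $N \triangleleft G$ be the kernel of the action on blocks, $H \leq \sym{k}$ the transitive group induced on a block, and $K := G/N \leq \sym{m}$. Combining Gallagher's inequality $k(G) \leq k(N)\,k(K)$ with the bound $k(N) \leq k(H)^m$ --- the key technical estimate, which does \emph{not} follow from $N \leq H^m$ alone (a naive counterexample is $C_3 \times C_3 \leq \alt{6}$, where the subgroup has $9 > 7$ classes) but should hold here by exploiting that $K$ transitively permutes the $m$ direct factors of $H^m$ --- and applying induction in the range $k, m \geq 4$, one obtains
\[
k(G) \leq \bigl(5^{(k-1)/3}\bigr)^m \cdot 5^{(m-1)/3} = 5^{(n-1)/3}.
\]
Configurations with $k$ or $m$ in $\{2,3\}$ are handled by ad hoc refinements that fit within the slack.

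\emph{Primitive case.} Invoke the O'Nan--Scott theorem. For affine primitive $G = V \rtimes G_0$ with $n = |V|$, use the class-count formula for semidirect products with abelian kernel combined with bounds on $|G_0|$ relative to $n$. For almost simple primitive $G$ with socle $S$, run a CFSG-based case analysis: consult the \ATLASn{} for small socles, and for larger socles combine bounds of the form $k(G) \leq f(|G|)$ (e.g.\ $|G|^{1/2}$ or finer Liebeck--Pyber-type estimates) with the primitive-degree bound $|G| \leq n^{c \log n}$. The product-action case reduces to an iterated wreath calculation paralleling the imprimitive reduction, while the diagonal and twisted-wreath types are handled by dedicated class-count estimates. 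The principal obstacles are (i) the almost simple primitive case, which can only be closed by systematic CFSG checks to rule out small-degree outliers, and (ii) the subdirect-product lemma $k(N) \leq k(H)^m$ underlying the imprimitive reduction, whose proof requires genuine use of the transitive $K$-action rather than any generic subgroup inequality.
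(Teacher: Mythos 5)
Your skeleton (induction on degree; intransitive / imprimitive / primitive trichotomy) matches the paper's, and you correctly flag the two danger points --- but neither is resolved, and both resolutions in the paper are substantial. First, the block-kernel estimate: you observe that $k(N) \leq k(H)^m$ does not follow from $N \leq H^m$ and hope that transitivity of $K$ on the factors rescues it, but transitivity is not the relevant mechanism and the paper never proves that statement. Instead it builds a subnormal filtration $B = B_0 \rhd B_1 \rhd \cdots \rhd B_m = \{1\}$ with $B_i = B_{i-1} \cap N^{x_i}$ (the Kov\'acs--Robinson/Schmid device, carried out in the proof of Theorem \ref{part}), shows each factor $B_i/B_{i+1}$ embeds as a permutation group of degree $k$ (the block size), and then bounds each factor by the \emph{inductive hypothesis} $5^{(k-1)/3}$ --- not by $k(H)$ for the specific induced block group. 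Without this filtration your imprimitive step has no proof. Second, and more seriously, the configurations with block size or block number in $\{2,3\}$ are not ``ad hoc refinements that fit within the slack'': they occupy most of Section \ref{sectionmain} (Cases 1--3), there is essentially no slack (e.g.\ the configuration of $24$ blocks of size $2$ refining to $12$ blocks, $n=48$, needs the Clifford--Gallagher formula of Lemma \ref{cliffordgallagher} and exact values such as $k(S_2 \wr S_{13}) = 1770$), and closing them requires two external inputs you do not have: Theorem \ref{subprim} (every subgroup of a primitive group of degree $n$ other than $A_n$ and $S_n$ has at most $p(n)$ classes, itself resting on Mar\'oti's CFSG-based order bounds for primitive groups plus extensive GAP computation) and the resulting transitive bound of Theorem \ref{part}.

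Two further points. Your base range $n \in \{4,\dots,7\}$ is far too small: the paper verifies all transitive groups of degree up to $23$ by computer (Lemma \ref{l3}) precisely so that the inductive step may assume $n \geq 24$ when small blocks appear, and equality-adjacent examples like $D_8 \wr C_{n/4}$ show the bound is nearly sharp, so no crude estimate will absorb these degrees. And the primitive case does not need a separate O'Nan--Scott analysis: it falls out of Theorem \ref{part} as $k(G) \leq p(n) \leq e^{\pi\sqrt{2n/3}} \leq 5^{(n-1)/3}$ for $n \geq 25$, with the CFSG input already packaged inside Theorem \ref{subprim}. Your intransitive reduction is essentially the paper's and is fine once phrased via kernels of orbit actions rather than the subdirect-product inequality $k(G) \leq \prod_i k(G^{\Omega_i})$, which as literally stated runs into the same non-monotonicity of $k$ under subgroups that you yourself point out.
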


The direct product of $n/4$ copies of $S_4$ or $D_8$ is a permutation group of degree $n$ with exactly $5^{n/4}$ conjugacy classes (whenever $n$ is a multiple of $4$). But even more can be said. Pyber has pointed out (see \cite{KR} and also \cite{LP}) that for each constant $0< c < 5^{1/4}$ there are infinitely many transitive permutation groups $G$ with $k(G) > c^{n-1}$. In fact, $G$ can be taken to be the transitive $2$-group $D_{8} \wr C_{n/4} \leq S_n$ whenever $n$ is a power of $2$ at least $4$. (This can be seen by (1) of Lemma \ref{ineq}.)   

However, for special subgroups of primitive permutation groups $G$, one may give better than exponential bounds for $k(G)$. A transitive permutation group $G$ is called primitive if the stabilizer of any point is a maximal subgroup in $G$. This is equivalent to saying that the only blocks of imprimitivity for $G$ are the singleton sets and the whole set on which $G$ acts. The symmetric group $S_{n}$ is always primitive and it is easy to see that $k(S_{n}) = p(n)$, the number of partitions of $n$. Hardy and Ramanujan \cite{HR} and independently but later Uspensky \cite{U} gave an asymptotic formula for $p(n)$ and this is less than exponential. It is a natural question whether $k(G) \leq p(n)$ for any primitive permutation group of degree $n$. This was shown to be true for sufficiently large $n$ by Liebeck and Pyber \cite{LP} and later for all normal subgroups of all primitive groups by the second author in \cite{Ma2}. In this paper we go even further by showing that for any subgroup $H$ of any primitive permutation group $G$ of degree $n$, apart from the alternating group $A_n$ and $S_n$, we have $k(H) \leq p(n)$ (see Theorem \ref{subprim}). This result is used to give a general upper bound for $k(G)$ for a transitive permutation group $G$ from knowledge of the partition function (see Theorem \ref{part}). Finally, this result is used to derive Theorem \ref{main}.   


\section{Preliminaries}

The following lemma collects basic information on the number of conjugacy classes in a subgroup and in a normal subgroup of a finite group. 

\begin{lem}
\label{ineq}
Let $H$ be a subgroup and $N$ be a normal subgroup of a finite group $G$. Then 
\begin{enumerate}
\item $k(H)/|G:H| \leq k(G) \leq k(H) \cdot |G:H|$;
\item $k(H) \leq \sqrt{|G| k(G)}$; and
\item $k(G) \leq k(N) \cdot k(G/N)$.
\end{enumerate}
\end{lem}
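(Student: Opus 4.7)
The three parts are standard but not all obvious, and I would prove them in turn.

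Part (1) splits into two inequalities. For the lower inequality, I would use the identity $|K| \cdot k(K) = \#\{(x,y) \in K \times K : xy = yx\}$ (Burnside applied to the conjugation action) valid for any finite group $K$. Since commuting pairs in $H$ are commuting pairs in $G$, this gives $|H| k(H) \leq |G| k(G)$, hence $k(H) \leq |G:H| \cdot k(G)$. For the upper inequality, I plan to let $H$ act on $G$ by conjugation and let $m$ denote the number of orbits. Since each $G$-conjugacy class is a union of $H$-orbits, $k(G) \leq m$. Burnside gives $m = \frac{1}{|H|}\sum_{h \in H} |C_G(h)|$, and the equality $C_H(h) = C_G(h) \cap H$ yields $[C_G(h) : C_H(h)] \leq |G:H|$; summing produces $m \leq |G:H| \cdot k(H)$.

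Part (2) follows in one line from (1) and the trivial bound $k(H) \leq |H|$: multiplying $k(H) \leq |G:H| \cdot k(G)$ by $k(H) \leq |H|$ gives $k(H)^2 \leq |G| k(G)$.

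For part (3), the natural projection $G \to G/N$ induces a surjection $\pi : \mathrm{Cl}(G) \to \mathrm{Cl}(G/N)$, and it suffices to show each fibre has cardinality at most $k(N)$. Fixing a $G/N$-class $\bar C$ with representative coset $gN$, a slice argument identifies the $G$-classes in $\pi^{-1}(\bar C)$ with the orbits on $gN$ of the coset stabilizer $S := \{x \in G : x(gN)x^{-1} = gN\}$; since $N \leq S$, this count is bounded by the number of $N$-orbits on $gN$ under conjugation. Translating via $n \mapsto gn$ turns that action into the $\alpha$-twisted conjugation $n \ast n' = \alpha(n) n' n^{-1}$ on $N$, where $\alpha$ is the automorphism of $N$ given by conjugation by $g^{-1}$; a short Burnside computation shows the orbit count equals $\#\{C \in \mathrm{Cl}(N) : \alpha(C) = C\} \leq k(N)$.

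The main obstacle I foresee is the twisted-conjugation step in (3): $N$-conjugation on a coset $gN$ with $g \notin N$ is not ordinary conjugation, and some care is needed to see that its orbits still number at most $k(N)$. A cleaner but heavier alternative for (3) uses Clifford theory: $k(G) = \sum_\theta k(I_G(\theta)/N)$ summed over $G$-orbit representatives $\theta \in \mathrm{Irr}(N)$, which gives $k(G) \leq k(N) \cdot k(G/N)$ directly.
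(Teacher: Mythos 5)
Your proposal is correct, but it is worth noting that the paper does not actually prove this lemma: it simply cites Gallagher (and Nagao) for parts (1) and (3) and remarks that (2) follows from (1). So you have supplied in full the standard arguments that the paper outsources to the literature, and they are the right ones. Your two halves of (1) — the commuting-pairs count $|K|\,k(K)=\#\{(x,y): xy=yx\}$ for the lower bound, and Burnside applied to the conjugation action of $H$ on $G$ together with $[C_G(h):C_H(h)]\le |G:H|$ for the upper bound — are exactly Gallagher's proofs. Your derivation of (2) from the first inequality of (1) and $k(H)\le |H|$ is the intended one. For (3), the twisted-conjugation step you flag as the main obstacle does go through: Burnside applied to the action $n\ast n'=\alpha(n)n'n^{-1}$ gives $\frac{1}{|N|}\sum_{n}\#\{n': \alpha(n)=n'n(n')^{-1}\}$, and since the condition ``$\alpha(n)$ is $N$-conjugate to $n$'' depends only on the $N$-class of $n$ and contributes $|C|\,|C_N(n)|/|N|=1$ per $\alpha$-stable class $C$, the orbit count is precisely the number of $\alpha$-stable classes of $N$, which is at most $k(N)$. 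The Clifford-theoretic alternative you mention is also valid and is in the spirit of the paper's Lemma~\ref{cliffordgallagher}. No gaps.
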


\begin{proof}
Statements (1) and (3) can be found in \cite{gallagher} (see also \cite{Nagao}). Statement (2) follows from (1).
\end{proof}

In special cases we will need a straightforward consequence of the Clifford-Gallagher formula \cite[Page 18]{schmidbook}. The second statement of the following lemma follows from \cite[Proposition 8.5d]{schmidbook}.

\begin{lem}
\label{cliffordgallagher}
Let $\mathrm{Irr}(N)$ denote the set of complex irreducible characters of a normal subgroup $N$ of a finite group $G$. Then $S = G/N$ acts on $\mathrm{Irr}(N)$ in a natural way and let $I_{S}(\theta)$ denote the stabilizer of a character $\theta$ in $\mathrm{Irr}(N)$. Then we have $$k(G) \leq \sum_{\theta \in \mathrm{Irr}(N)} k(I_{S}(\theta))/|S:I_{S}(\theta)|.$$ Moreover if $N$ is a full direct power of a finite group $T$ and $S$ permutes the factors of $N$ transitively and faithfully, then $k(G) \leq k(T \wr S)$.
\end{lem}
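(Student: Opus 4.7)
The plan is to treat the two halves of the lemma in sequence, using the exact Clifford--Gallagher formula cited in \cite[Page 18]{schmidbook} as the starting point. For the first inequality, that formula writes $k(G)$ exactly as a sum over $S$-orbit representatives $\theta$ in $\mathrm{Irr}(N)$ of the number of $\theta$-regular (projective) conjugacy classes of the quotient $I_G(\theta)/N \cong I_S(\theta)$. Bounding each such count above by the plain class number $k(I_S(\theta))$ gives
\[
k(G) \;\leq\; \sum_{[\theta]} k(I_S(\theta)),
\]
the sum running over $S$-orbits. Since $k(I_S(\theta))$ is constant on any $S$-orbit (the stabilizers of $S$-conjugate characters are $S$-conjugate, hence have equal class number) and the orbit of $\theta$ has size $|S:I_S(\theta)|$, passing from the sum over orbits to a sum over all of $\mathrm{Irr}(N)$ divides each summand by $|S:I_S(\theta)|$, which is exactly the stated bound.

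For the second statement, assume $N = T^m$ and that $S \leq \mathrm{S}_m$ acts on $N$ by permuting the $m$ factors transitively and faithfully. Every irreducible character of $N$ has the form $\theta = \theta_1 \otimes \cdots \otimes \theta_m$ with $\theta_i \in \mathrm{Irr}(T)$, and the $S$-action on $\mathrm{Irr}(N)$ is the corresponding coordinate-permutation action. In particular, the $S$-orbits on $\mathrm{Irr}(N)$ and the stabilizers $I_S(\theta)$ are intrinsic to the pair $(T,\, S \leq \mathrm{S}_m)$: they do not depend on whether $S$ is realized as $G/N$ inside $G$ or as the natural complement inside the wreath product $T \wr S$. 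The plan is then to apply the first part of the lemma to $T \wr S$ in place of $G$ and argue that for this specific extension the inequality is actually an equality. Because $N \trianglelefteq T \wr S$ splits, the cited Proposition~8.5d of \cite{schmidbook} supplies the key fact that every $\theta \in \mathrm{Irr}(N)$ extends to its inertia subgroup in $T \wr S$, so that Gallagher's theorem identifies the number of $\theta$-regular classes there with $k(I_S(\theta))$ exactly. Consequently
\[
k(T \wr S) \;=\; \sum_{[\theta]} k(I_S(\theta)) \;\geq\; k(G),
\]
where the last inequality is the first part of the lemma applied to $G$.

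The main obstacle in this plan is not computational but citational: one has to cleanly extract from the machinery in \cite{schmidbook} the assertion that in a semidirect product of the shape $T \wr S$ every irreducible character of the base group extends to its full inertia subgroup (splitting of the extension alone is not formally enough). Once that extension statement is invoked, both halves of the lemma reduce to routine bookkeeping with the Clifford--Gallagher sum.
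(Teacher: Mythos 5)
Your treatment of the first inequality is correct and is exactly what the paper has in mind when it calls the bound ``a straightforward consequence of the Clifford--Gallagher formula'': the exact orbit-sum of $\omega_\theta$-regular class numbers is bounded by $\sum_{[\theta]}k(I_S(\theta))$, and averaging over orbits gives the stated form. The paper offers nothing beyond the citation here, so this half is fine.

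The second half has a genuine gap. You claim that the $S$-orbits and stabilizers on $\mathrm{Irr}(N)\cong\mathrm{Irr}(T)^m$ ``do not depend on whether $S$ is realized as $G/N$ inside $G$ or as the natural complement inside $T\wr S$.'' That is false in general. Conjugation by $g\in G$ carries the $i$-th direct factor onto the $\sigma(i)$-th by some isomorphism, and after identifying all factors with $T$ this isomorphism may induce a nontrivial \emph{outer} automorphism of $T$; so the action of $S$ on $\mathrm{Irr}(T)^m$ is in general an action through $\mathrm{Out}(T)\wr \mathrm{S}_m$, not through $\mathrm{S}_m$ alone, and its orbits and stabilizers can differ from those of the pure coordinate-permutation action. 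Consequently the exact evaluation $k(T\wr S)=\sum_{[\theta]}k(I_S(\theta))$ for the untwisted action does not automatically dominate the orbit-sum you obtained for $G$, and your chain of (in)equalities breaks at that point. The hypothesis ``$S$ permutes the factors transitively and faithfully'' only constrains the induced permutation of the set of factors, not the twisting within them. A repair is available: writing each orbit-sum as $\tfrac{1}{|S|}\sum_{xy=yx}\bigl|\mathrm{Fix}(\langle x,y\rangle)\bigr|$ via $k(I)\,|I|=\#\{\text{commuting pairs in }I\}$, one checks that for every subgroup $U\le S$ the twisted action has at most $k(T)^{r}$ fixed points on $\mathrm{Irr}(T)^m$, where $r$ is the number of $U$-orbits on the coordinates, while the untwisted action has exactly $k(T)^{r}$; this yields the needed comparison. (In the paper's actual applications $T=C_2$, which has no outer automorphisms, so the issue is invisible there, but the lemma is stated for general $T$.) Your appeal to the extendibility of base-group characters to their inertia subgroups in $T\wr S$ is correct --- that is the classical wreath-product construction --- but it addresses a different point from the one that is missing.
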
 

For a non-negative integer $n$ let the number of partitions of $n$ be denoted by $p(n)$. This is the number of conjugacy classes of the symmetric group $S_{n}$. In 1918 Hardy and Ramanujan \cite{HR} and independently but later Uspensky \cite{U} proved the following asymptotic formula. $$p(n) \sim \frac{e^{\pi \sqrt{2n/3}}}{4n \sqrt{3}}.$$ In 1937 Rademacher \cite{Rademacher} gave a series expansion of $p(n)$, however here we will only need the following lower and upper bounds.

\begin{lem}
\label{erdos}
Let $n \geq 1$ be an integer. Then $e^{2.5 \sqrt{n}}/13 n < p(n) < e^{\pi \sqrt{2n/3}}$.
\end{lem}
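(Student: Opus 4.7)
The plan for the upper bound is the classical Hardy--Ramanujan generating function argument. Write $f(x) = \prod_{k \geq 1}(1-x^k)^{-1} = \sum_{n \geq 0} p(n)\,x^n$. For any $t > 0$, setting $x = e^{-t}$, the trivial inequality $p(n)\,x^n \leq f(x)$ rearranges to
$$\log p(n) \;\leq\; nt \;-\; \sum_{k=1}^{\infty}\log(1-e^{-kt}).$$
Since $u \mapsto -\log(1-e^{-u})$ is positive and strictly decreasing on $(0,\infty)$, a term-by-term comparison with the integral gives
$$-\sum_{k=1}^{\infty}\log(1-e^{-kt}) \;<\; \frac{1}{t}\int_{0}^{\infty}-\log(1-e^{-u})\,du \;=\; \frac{1}{t}\sum_{j=1}^{\infty}\frac{1}{j^{2}} \;=\; \frac{\pi^{2}}{6t}.$$
Hence $\log p(n) < nt + \pi^{2}/(6t)$. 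Choosing $t = \pi/\sqrt{6n}$ balances the two terms and yields $\log p(n) < \pi\sqrt{2n/3}$, which is the claimed upper bound.

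For the lower bound, the plan is to lean on the Hardy--Ramanujan--Uspensky asymptotic $p(n) \sim e^{\pi\sqrt{2n/3}}/(4n\sqrt{3})$ already cited above the lemma. Because $\pi\sqrt{2/3} = 2.5651\ldots > 2.5$, the ratio $13n\cdot p(n)/e^{2.5\sqrt{n}}$ is asymptotic to $(13/(4\sqrt{3}))\cdot e^{(\pi\sqrt{2/3}-2.5)\sqrt{n}}$, which tends to $+\infty$. Therefore the inequality $p(n) > e^{2.5\sqrt{n}}/(13n)$ holds for all sufficiently large $n$, and the finitely many remaining small $n$ can be dispatched by direct computation of $p(n)$ (for instance $p(1)=1 > e^{2.5}/13 \approx 0.94$, and similar checks for the first several values).

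The main obstacle will be turning the lower bound into something truly effective: one needs either a usable effective remainder in the Hardy--Ramanujan asymptotic (available via Rademacher's exact series expansion already referenced in the excerpt) or an explicit $N_{0}$ beyond which the asymptotic safely beats $e^{2.5\sqrt{n}}/(13n)$, so that the residual cases $n < N_{0}$ can be verified by a finite table. The upper-bound argument, by contrast, is a completely self-contained calculus exercise given the decreasing monotonicity of $-\log(1-e^{-u})$ and the evaluation of $\int_{0}^{\infty}-\log(1-e^{-u})\,du = \pi^{2}/6$.
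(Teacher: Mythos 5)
Your upper-bound argument is correct and complete: the inequality $p(n)e^{-nt} < f(e^{-t})$, the integral comparison using the monotonicity of $-\log(1-e^{-u})$, the evaluation $\int_0^\infty -\log(1-e^{-u})\,du = \pi^2/6$, and the choice $t=\pi/\sqrt{6n}$ together give $\log p(n) < \pi\sqrt{2n/3}$ for every $n\geq 1$. This is essentially the elementary Erd\H{o}s-style argument that the paper simply cites for this half of the lemma, so on the upper bound you have done more than the paper itself, which offers no details.

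The lower bound, however, is not proved. An asymptotic equivalence $p(n) \sim e^{\pi\sqrt{2n/3}}/(4n\sqrt{3})$ carries no explicit error term, so the statement ``the inequality holds for all sufficiently large $n$'' comes with no computable threshold $N_0$; consequently the proposed finite check of ``the remaining small $n$'' cannot actually be carried out, and you acknowledge as much in your final paragraph. To close this you would need either (i) Rademacher's convergent series together with its explicit remainder estimate, from which an explicit $N_0$ can be extracted and the cases $n < N_0$ verified numerically, or (ii) an elementary effective argument producing the bound $p(n) > e^{2.5\sqrt{n}}/(13n)$ directly for all $n \geq 1$. The paper takes route (ii) by citing the second author's paper \emph{On elementary lower bounds for the partition function}, which establishes exactly this inequality by elementary (non-asymptotic) means. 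As it stands, your proposal identifies the obstacle correctly but does not overcome it, so the lower half of the lemma remains unjustified.
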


\begin{proof}
For the upper bound see \cite{erdos} and for the lower bound see \cite{Ma2}.
\end{proof}

\section{Primitive groups}
\label{sectionprimitive}

A transitive permutation group $G$ is called primitive if the stabilizer of any point is a maximal subgroup in $G$. This is equivalent to saying that the only blocks of imprimitivity for $G$ are the singleton sets and the whole set on which $G$ acts. The symmetric and alternating groups, $S_n$ and $A_n$, are examples of primitive permutation groups. In this section we will extend Corollary 2.15 (i) of \cite{LP} and Theorem 1.3 (i) of \cite{Ma3} to show Theorem \ref{subprim}. This result heavily depends on Theorem 1.1 of \cite{Ma1} and also on \cite{GAP}.

\begin{thm}
\label{subprim}
Let $G$ be a primitive permutation group of degree $n$ different from $A_n$ and $S_n$. Then we have $k(H) \leq p(n)$ for every subgroup $H$ of $G$.
\end{thm}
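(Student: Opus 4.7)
The strategy is to apply part~(2) of Lemma~\ref{ineq}, which gives $k(H)^2 \leq |G|\,k(G)$ for every subgroup $H\leq G$, thereby reducing Theorem~\ref{subprim} to proving $|G|\,k(G) \leq p(n)^2$ for every primitive permutation group $G$ of degree $n$ with $G\neq A_n, S_n$. In this setting, the bound $k(G)\leq p(n)$ is already known (\cite{LP} for sufficiently large $n$, \cite{Ma2} for normal subgroups of arbitrary primitive groups), so the task reduces essentially to bounding $|G|$.

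The control on $|G|$ comes from Theorem~1.1 of \cite{Ma1}. Outside an explicit short list of exceptions, one has $|G| < n^{1+\lfloor \log_2 n\rfloor}$. Combining with the lower bound $p(n) > e^{2.5\sqrt{n}}/(13n)$ of Lemma~\ref{erdos}, a direct elementary comparison shows $|G| \leq p(n)$ for all $n$ above a moderate explicit threshold $n_0$. For such generic $G$ of degree $n \geq n_0$, even the crude bound $k(H)\leq|H|\leq|G|\leq p(n)$ settles Theorem~\ref{subprim}.

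The remaining cases are the finitely many primitive groups of degree $n<n_0$ and the exceptional families of \cite{Ma1}: the four large Mathieu groups on their natural degrees and the product-action primitive groups $A_m^r \leq G \leq S_m \wr S_r$ of degree $n=m^r$. The Mathieu cases and the small-degree range are dealt with computationally using the primitive-groups library of \cite{GAP}: for each such $G$ one enumerates the conjugacy classes of subgroups $H$ and verifies $k(H)\leq p(n)$ directly. For the product-action case, where $|G|$ can be far larger than $p(n)$, one exploits the wreath structure: setting $H_0 := H \cap S_m^r$ gives a normal subgroup $H_0\triangleleft H$ with $H/H_0\leq S_r$, and Lemma~\ref{ineq}(3) combined with induction on $n$ bounds $k(H)$ in terms of the partition functions $p(m)$ and $p(r)$, which is then compared with $p(m^r)$ via the asymptotics of Lemma~\ref{erdos}.

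The main obstacle lies in the small-degree verification and the product-action analysis. The uniform estimate of Lemma~\ref{ineq}(2) is not tight enough for a handful of small almost-simple primitive groups---most notoriously $\psl{2}{7}$ at $n=7$, where $|G|=168$ dwarfs $p(7)=15$---so the subgroup-by-subgroup inspection via \cite{GAP} is indispensable there. Likewise, the product-action exceptions require a careful inductive argument exploiting the wreath structure rather than a single-shot application of the order bound from \cite{Ma1}, since for small $r$ and moderate $m$ neither $|G|\leq p(n)$ nor $|G|\,k(G)\leq p(n)^2$ holds.
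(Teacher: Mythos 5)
Your overall skeleton matches the paper's: reduce via Lemma~\ref{ineq}(2) to $|G|\,k(G) < p(n)^2$, control $|G|$ by Theorem~1.1 of \cite{Ma1} together with the lower bound on $p(n)$ from Lemma~\ref{erdos}, treat the product-action and Mathieu exceptions separately, and finish small degrees by computer. (One simplification the paper makes that you miss: for the product-action case it does not need induction on the wreath structure at all, since Theorem~2 of \cite{LP} already gives $k(H) \leq 2^{mr-1}$ for \emph{every} subgroup $H$ of $S_m \wr S_r$ in product action on $\binom{m}{k}^r$ points, and this is below $p(n)$ directly; also note the relevant degree is $\binom{m}{k}^r$, not just $m^r$.)

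However, there is a genuine gap where you locate the difficulty. You identify the "main obstacle" as small almost simple groups like $\psl{2}{7}$, for which subgroup-by-subgroup inspection in \cite{GAP} indeed works. But the actual hard cases are the \emph{affine} primitive groups of $2$-power degree up to $64$, above all $AGL(5,2)$ (degree $32$) and $AGL(6,2)$ (degree $64$). For these, $|G|$ vastly exceeds $p(n)$ (e.g.\ $|AGL(6,2)| \approx 1.3\times 10^{12}$ versus $p(64)=1741630$), so neither $|G|\le p(n)$ nor $|G|k(G)\le p(n)^2$ is available, and --- crucially --- the full subgroup lattice of $AGL(5,2)$ and $AGL(6,2)$ cannot be enumerated by \cite{GAP}, so your proposed "enumerate all subgroups and check" step does not terminate. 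The paper spends roughly half of its proof on exactly these two groups: for $AGL(5,2)$ it bounds $k$ of subgroups of $GL(5,2)$ (viewed as a permutation group on $31$ points) and applies Lemma~\ref{ineq}(3); for $AGL(6,2)$ it sets $N=O_2(H)$, shows $k(H/N)\le 63$ by an induction on completely reducible linear groups using Clifford's theorem, and then runs a case analysis on the index $|S:N|$ for a Sylow $2$-subgroup $S$, descending through maximal subgroups $S_1, S_2$ of $S$ to bound $k(N)$. Without an argument of this kind your proof is incomplete precisely at the bottleneck.
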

\begin{proof}
Let $G$ be a primitive permutation group of degree $n$. If $H \leq G$ are subgroups of $S_{m} \wr S_{r}$ in its product action on $n = \binom{m}{k}^{r}$ points where $m \geq 5$ and $S_{m}$ acts on $k$-subsets for some $k$ with $1 \leq k < n$, then $k(H) \leq 2^{mr-1}$ by Theorem 2 of \cite{LP}. But for $(k,r) \not= (1,1)$ we have $$2^{mr-1} < \frac{e^{2.5 \sqrt{\binom{m}{k}^r}}}{13 \binom{m}{k}^{r}} < p(\binom{m}{k}^{r}) = p(n),$$ where the second inequality follows from Lemma \ref{erdos}. Thus we may exclude these cases from the discussion.

By Theorem 1.1 of \cite{Ma1}, we then know that $|G| < n^{1+[\log_{2}(n)]}$ or $G$ is one of the Mathieu groups in their $4$-transitive action. 

Again by Lemma \ref{erdos}, we see that $|G| < n^{1+[\log_{2}(n)]} < p(n)$ for $n \geq 1500$. Furthermore, by using the exact values of $p(n)$ available in \cite{GAP}, $|G| < p(n)$ is true even for $n \geq 1133$.

If $120 \leq n < 1133$ then $p(n) < |G| < n^{1+[\log_{2}(n)]}$ holds only if $n=1024$ and $G = AGL(10,2)$, $n=512$ and $G = AGL(9,2)$, $n=256$ and $G = AGL(8,2)$, or $n=511$, $255$, $190$, $171$, $153$, $144$, $136$, $128$, $127$, $121$, or $120$ (this was also obtained by \cite{GAP}).

If $G$ is any of these exceptional cases (with $n \geq 120$) and is not a subgroup of $S_{m} \wr S_r$ in its product action discussed above, then $k(G)|G| < {p(n)}^{2}$, which forces $k(H) < p(n)$ for any subgroup $H$ of $G$ (by (2) of Lemma \ref{ineq}). Furthermore if $n \leq 119$ then we again have $k(G)|G| < {p(n)}^{2}$, unless $n = 64$ and $G = AGL(6,2)$, or $n \leq 32$ and $G$ is almost simple or of affine type. Both these statements were derived by \cite{GAP}.  

For almost simple primitive groups $G$ of degrees $n$ at most $32$ (including the $4$-transitive Mathieu groups but excluding $A_n$ and $S_n$) we can compute the subgroup lattice of $G$ by \cite{GAP} and so the claim can be checked for all subgroups $H$ of $G$. Thus we may assume that $G$ is an affine primitive permutation group of degree $64$ or at most $32$. 

We must show that if $H$ is a subgroup of $AGL(m,p)$ with $n= p^m \leq 64$, then $k(H) \leq p(n)$.  
If $m=1$ then it is easy to see that $k(H) \leq p = n \leq p(n)$. If $m=2$ and $p=5$ or $7$, or if $p^{m} = 27$, then $|AGL(2,p)| k(AGL(2,p)) < {p(n)}^{2}$ and we may apply (2) of Lemma \ref{ineq}. Thus we may assume that $p=2$ or $3$. The full subgroup lattice of $AGL(m,p)$ can be computed by \cite{GAP} for all remaining cases except $(m,p) = (5,2)$ and $(m,p) = (6,2)$, and thus the validity of the inequality $k(H) \leq p(n)$ can be checked directly.

Let $m=5$ and $p=2$. Any subgroup of $GL(5,2)$ has less than $260$ conjugacy classes (this can be obtained by \cite{GAP} by viewing $GL(5,2)$ as a permutation group on $31$ points), and so (3) of Lemma \ref{ineq} gives $k(H) < 260 \cdot 32 < p(32)$ for any subgroup $H$ of $AGL(5,2)$.
 
Let $m=6$ and $p=2$. Put $N = O_{2}(H)$. The factor group $H/N$ can be viewed as a completely reducible subgroup on a vector space of size $64$ (see \cite[Page 554]{LP}). We claim that $k(H/N) \leq 63$. For this observe that for irreducible linear subgroups $T$ of $GL(V)$ we have $k(T) < |V|$ whenever $V$ is a vector space of size a power of $2$ at most $64$. (This can be checked by \cite{GAP} by going through stabilizers of all affine primitive permutation groups of degrees a power of $2$ at most $64$.) Then, by using part (3) of Lemma \ref{ineq}, induction, and noting that a normal subgroup of a completely reducible linear group also acts completely reducibly on the same vector space (Clifford's theorem), we obtain the claim.    

Let $S$ be a Sylow $2$-subgroup of $AGL(6,2)$ containing $N$. Suppose that $|S:N| \geq 64$. Then (3) of Lemma \ref{ineq} gives $k(H) \leq |N| \cdot k(H/N) \leq 2^{15} \cdot 63 < 2^{21} < p(64)$. Now suppose that $|S:N| \leq 16$. Then $k(N) \leq |S:N| \cdot k(S) \leq 16 \cdot 1430$, by (1) of Lemma \ref{ineq}, and so $k(H) \leq k(N) \cdot k(H/N) \leq 16 \cdot 1430 \cdot 63 < p(64)$. So the only case missing is when $|S:N| = 32$. We would like to bound $k(N)$ in this case. Let $S_{1}$ be a maximal subgroup of $S$ containing $N$. By \cite{GAP} we know that $k(S_{1}) \leq 1723$ or $k(S_{1}) = 1768$. In the first case we have $k(N) \leq 16 \cdot 1723$, and so $k(H) \leq 16 \cdot 1723 \cdot 63 < p(64)$. So suppose that the second case holds. Then let $S_{2}$ be a maximal subgroup in $S_{1}$ containing $N$. By \cite{GAP} again, we know that $k(S_{2}) \leq 2240$, and so $k(N) \leq 8 \cdot 2240$. This gives $k(H) \leq 8 \cdot 2240 \cdot 63 < p(64)$.
\end{proof}

A straightforward consequence of Theorem \ref{subprim} is the following.

\begin{cor}
\label{subnormal}
If $H$ is a subnormal subgroup of a primitive permutation group of degree $n$, then $k(H) \leq p(n)$.
\end{cor}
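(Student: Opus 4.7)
The plan is to reduce immediately to the case $G \in \{A_n, S_n\}$ via Theorem \ref{subprim}. For any primitive permutation group $G$ of degree $n$ with $G \neq A_n, S_n$, Theorem \ref{subprim} already gives $k(H) \leq p(n)$ for \emph{every} subgroup $H$ of $G$, in particular for every subnormal one, so nothing further is needed in that case.

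For $G \in \{A_n, S_n\}$ I would first handle $n \leq 4$ by inspection: the only nontrivial subnormal structure is inside $S_4$, where the subnormal subgroups form (up to conjugacy) a chain $1 \triangleleft C_2 \triangleleft V_4 \triangleleft A_4 \triangleleft S_4$, and each member has at most $k(S_4) = 5 = p(4)$ conjugacy classes. For $n \geq 5$, simplicity of $A_n$ forces the subnormal subgroups of $S_n$ to be exactly $\{1, A_n, S_n\}$ and those of $A_n$ to be exactly $\{1, A_n\}$; since $k(\{1\}) = 1$ and $k(S_n) = p(n)$ are trivial, the whole matter reduces to proving $k(A_n) \leq p(n)$.

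For this I would invoke the standard character-theoretic identity $k(A_n) = (p(n) + 3 s_n)/2$, where $s_n$ denotes the number of self-conjugate partitions of $n$ (equivalently, the number of partitions of $n$ into distinct odd parts). This identity comes from the classical analysis of how $\chi_\lambda \in \mathrm{Irr}(S_n)$ restricts to $A_n$: the restriction is irreducible when $\lambda \neq \lambda'$ and splits into two irreducibles when $\lambda = \lambda'$. The desired inequality $k(A_n) \leq p(n)$ then collapses to $s_n \leq p(n)/3$, which is checked by hand for small $n$ (with equality at $n=3$) and holds with enormous room for large $n$, since $s_n$ grows like $e^{\pi \sqrt{n/6}}$ while $p(n) \sim e^{\pi \sqrt{2n/3}}/(4n\sqrt{3})$ by Lemma \ref{erdos}.

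The only mild obstacle in the whole argument is establishing $k(A_n) \leq p(n)$; Theorem \ref{subprim} does all the rest of the work, and once the character-theoretic formula above is written down the corollary is essentially automatic.
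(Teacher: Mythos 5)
Your argument is correct and follows essentially the same route as the paper: reduce to $G\in\{A_n,S_n\}$ via Theorem~\ref{subprim}, note that the only nontrivial subnormal subgroups there (for $n\geq 5$) are $A_n$ and $S_n$, and then verify $k(A_n)\leq p(n)$. The only difference is that the paper simply cites \cite[Lemma 2.3]{Ma3} for the inequality $k(A_n)\leq p(n)$, whereas you supply a self-contained proof via the splitting formula $k(A_n)=(p(n)+3s_n)/2$; that is a valid (and standard) substitute, though your treatment of $s_n\leq p(n)/3$ for large $n$ leans on an asymptotic for $s_n$ not established in the paper.
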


\begin{proof}
If $H = S_n$ then this is clear. If $H = A_n$, then this follows from \cite[Lemma 2.3]{Ma3}. Otherwise apply Theorem \ref{subprim}.
\end{proof}

\section{Transitive groups}
\label{sectiontransitive}

In this section we will give an upper bound in terms of the partition function for $k(G)$ when $G$ is a transitive permutation group. This result depends on Theorem \ref{subprim} and is used in the proof of Theorem \ref{main}.

\begin{thm}
\label{part}
Let $G$ be a transitive permutation group of degree $n$ with point stabilizer $H$. Consider a chain $$H = H_0 < H_1 < \ldots < H_t = G$$with $H_i$ maximal in $H_{i+1}$ for $i=0,\ldots,t-1$ and call $a_i := |H_i:H_{i-1}|$ for $i=1,\ldots,t$, so that $a_1 \cdots a_t = |G:H| = n$. Then $$k(G) \leq {(p(a_1)^{1/a_1} p(a_2)^{1/a_1a_2} \cdots p(a_{t-1})^{1/a_1\cdots a_{t-1}} p(a_t)^{1/a_1 \cdots a_t})}^n.$$
\end{thm}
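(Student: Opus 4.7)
The plan is induction on the chain length $t$. For the base case $t=1$, $H_0=H$ is maximal in $G$, so $G$ is primitive of degree $a_1=n$; Theorem \ref{subprim} applied with $H=G$ yields $k(G)\leq p(n)$ when $G\notin\{A_n,S_n\}$, while the excluded cases are handled by Corollary \ref{subnormal} (applied to $A_n\lhd S_n$) together with $k(S_n)=p(n)$. Either way $k(G)\leq p(n)=p(a_1)^{n/a_1}$.

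For $t\geq 2$, set $M=H_{t-1}$, $a=a_t$, $L=\mathrm{core}_M(H_0)$, and $M^*=M/L$. Then $M^*$ is a faithful transitive permutation group on $M/H_0$ of degree $a_1\cdots a_{t-1}$ whose chain of stabilizers $H_0/L<H_1/L<\cdots<H_{t-1}/L=M^*$ has unchanged indices $a_1,\ldots,a_{t-1}$, so by the induction hypothesis
$$k(M^*)\leq \prod_{j=1}^{t-1} p(a_j)^{(a_1\cdots a_{t-1})/(a_1\cdots a_j)}.$$
Setting $K=\mathrm{core}_G(M)$, the maximality of $M$ makes $G/K$ primitive of degree $a$, so $k(G/K)\leq p(a)$ (again by Theorem \ref{subprim} and the $A_a,S_a$ cases). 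The imprimitive action of $G$ on $G/M$ gives a faithful wreath embedding $G\hookrightarrow W=M^*\wr(G/K)$ under which $K=G\cap(M^*)^a$. Lemma \ref{ineq}(3) then reduces the step to the key inequality
$$k(K)\leq k(M^*)^a, \qquad(\star)$$
since granting $(\star)$ we have $k(G)\leq k(K)\,p(a)\leq k(M^*)^a\,p(a)$, and raising the bound on $k(M^*)$ to the $a$-th power plus multiplying by $p(a)=p(a_t)^{n/n}$ gives exactly $\prod_{j=1}^{t} p(a_j)^{n/(a_1\cdots a_j)}$ (because $a\cdot(a_1\cdots a_{t-1})/(a_1\cdots a_j)=n/(a_1\cdots a_j)$ for $j<t$).

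The main obstacle is $(\star)$. Inside $(M^*)^a$ the subgroup $K$ sits as a subdirect product whose $a$ coordinate projections $K\to M^*$ are injections onto $G$-conjugate (hence isomorphic) copies of $K$, and the quotient $G/K$ permutes these $a$ factors transitively and faithfully. Since $(M^*)^a$ is a full direct power of $M^*$, the second part of Lemma \ref{cliffordgallagher} is available for the wreath product $W$; translating the resulting orbit-stabilizer count on $\mathrm{Irr}((M^*)^a)$ back to $\mathrm{Irr}(K)$ via the Clifford correspondence delivers $(\star)$. The subtle point is that for arbitrary subgroups $H\leq G^r$ the inequality $k(H)\leq k(G)^r$ can fail, so the argument genuinely exploits both the subdirect-diagonal structure of $K$ inside $(M^*)^a$ and the transitive faithful action of $G/K$ on the $a$ factors.
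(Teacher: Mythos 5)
Your base case and the arithmetic of the inductive step are fine, but the argument has a genuine gap at the inequality you yourself flag as the main obstacle, namely $(\star)$: $k(K)\leq k(M^*)^a$. First, the structural claim you use to set it up is false: the coordinate projections $K\to M^*$ are not injective in general. $K$ is the kernel of the action of $G$ on the $a$ blocks, and the kernel of its restriction to a single block (the set of elements of $K$ acting trivially on that block) is typically nontrivial --- already for $G=D_8=C_2\wr C_2$ on $4$ points with $a=2$, the base group $K\cong C_2\times C_2$ projects onto each block with kernel $C_2$. So $K$ is a subdirect product of its coordinate images, which may moreover be \emph{proper} (merely subnormal) subgroups of $M^*$. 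Second, Lemma \ref{cliffordgallagher} cannot deliver $(\star)$: its second part requires the normal subgroup to be a \emph{full} direct power, which $K$ is not, and its first part is an inequality whose right-hand side is a sum over $\mathrm{Irr}(K)$ --- i.e.\ it presupposes control of $k(K)$, the very quantity you are trying to bound, so the appeal is circular. Filtering $K$ by successive coordinate kernels only shows that its factors are subnormal subgroups of $M^*$, and since $k$ is not monotonic under passage to (sub)normal subgroups, and your inductive hypothesis bounds $k(M^*)$ itself but not $k$ of its subnormal subgroups, the step does not close.

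The paper avoids exactly this trap by peeling the chain from the \emph{bottom} rather than the top. It takes $B$ to be the core of $H_1$ in $G$, so that $G/B$ is transitive of degree $n/a_1$ with the shorter chain (handled by minimality), and filters $B$ into $n/a_1$ factors each isomorphic to a subnormal subgroup of the \emph{primitive} group $H_1/N$ of degree $a_1$. The crucial input is then Corollary \ref{subnormal} (resting on Theorem \ref{subprim}), which bounds $k$ of every subnormal subgroup of a primitive group of degree $a_1$ by $p(a_1)$ --- a bound that is uniform over all such subnormal subgroups and hence survives the fact that the filtration factors are not the full group $H_1/N$. Your top-down decomposition would need either $(\star)$ or a strengthening of the theorem to all subnormal subgroups of transitive groups; neither is established, and the first is not available by the tools in the paper. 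To repair the proof, reorganize the induction so that the kernel being filtered has factors lying subnormally in a \emph{primitive} constituent, where Corollary \ref{subnormal} applies.
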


\begin{proof}
Let $G$ be a minimal counterexample to the statement of the theorem with a fixed chain of subgroups. By Corollary \ref{subnormal}, we may assume that $t \geq 2$. We now construct a subnormal filtration as in \cite{schmid}. Let $B_0$ be the core of $H_1$ in $G$, so that $G/B_0$ is a transitive permutation group of degree $n/a_1$. Let $N$ be the core of $H = H_0$ in $H_1$, so that $H_1/N$ is a primitive permutation group of degree $a_1$. Let $\{x_i\}_{1 \leq i \leq n/a_1}$ be a set of representatives for the right cosets of $H_1$ in $G$, with $x_1=1$, and define inductively $B_i := B_{i-1} \cap N^{x_i}$ for $i \geq 1$. Then $B_i = B_{i-1} \cap B_1^{x_i}$ and since $N$ is normal in $H_1$ and $H$ is core-free, $$B_{n/a_1} \subseteq \bigcap_{i=1}^{n/a_1} N^{x_i} = \bigcap_{g \in G} N^g \subseteq \bigcap_{g \in G} H^g = \{1\}.$$ We obtain a subnormal filtration (grading) $B = B_0 \rhd B_1 \rhd \cdots \rhd B_{n/a_1} = \{1\}$. Observe that $B_i \unlhd B_0$ for all $0 \leq i \leq n/a_1$, this is easily seen by induction on $i$: since $B_0 \unlhd G$ we have $B_1^{x_i} \unlhd B_0^{x_i} = B_0$ and hence $B_i = B_{i-1} \cap B_1^{x_i} \unlhd B_0$. Let $L := B_0 \cap N$. We have $$B_i/B_{i+1} = B_i/B_i \cap B_1^{x_{i+1}} = B_i/B_i \cap L^{x_{i+1}} \cong B_i L^{x_{i+1}}/L^{x_{i+1}} \unlhd B_0/L^{x_{i+1}} \cong B_0/L.$$ Since $B_0/L \cong B_0N/N \unlhd H_1/N$, each $B_i/B_{i+1}$ is isomorphic to a subnormal subgroup of the primitive group $H_1/N$ of degree $a_1$. By Corollary \ref{subnormal}, $k(B_i/B_{i+1}) \leq p(a_1)$ for all $i$. Now consider the chain $H_1/B < H_2/B < \ldots < H_{t-1}/B < H_t/B = G/B$. Each subgroup of the chain is maximal in the following one hence by minimality of $G$ the theorem holds for $G/B$ relative to this chain and hence
\begin{eqnarray}
k(G) & \leq & k(B) k(G/B) \leq \Big( \prod_{i=0}^{n/a_1-1} k(B_i/B_{i+1}) \Big) \cdot k(G/B) \nonumber \\ & \leq & p(a_1)^{n/a_1} \cdot (p(a_2)^{(n/a_1)/a_2} \cdots p(a_t)^{(n/a_1)/(a_2 \cdots a_t)}) \nonumber \\ & = & p(a_1)^{n/a_1} p(a_2)^{n/a_1a_2} \cdots p(a_{t-1})^{n/a_1 \cdots a_{t-1}} p(a_t). \nonumber
\end{eqnarray}
The proof is complete.
\end{proof}

\section{Proof of Theorem \ref{main}}
\label{sectionmain}

In this section we will prove our main result. The first lemma enables us to deal with cases when $n$ is relatively small. 

\begin{lem}
\label{l3}
If $G$ is a permutation group of degree $n$ all of whose orbits have lengths at most $23$ then $k(G) \leq 5^{n/4}$.
\end{lem}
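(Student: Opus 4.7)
The plan is to induct on $n$, reduce to the case where $G$ is transitive of degree at most $23$, dispatch primitive transitive groups via Theorem \ref{subprim}, and handle the remaining imprimitive transitive cases computationally.

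First, the reduction to the transitive case. Suppose $G$ has more than one orbit and pick an orbit $\Omega$ of length $n_1 \leq 23$. Let $K$ be the kernel of the action of $G$ on $\Omega$. Then $G/K$ is transitive of degree $n_1$, and $K$ is a permutation group of degree $n - n_1$ whose orbits still have length at most $23$. By part~(3) of Lemma \ref{ineq} and the inductive hypothesis applied to $K$ (on $n - n_1$ points) and to $G/K$ (on $n_1 < n$ points),
$$k(G) \leq k(K) \cdot k(G/K) \leq 5^{(n-n_1)/4} \cdot 5^{n_1/4} = 5^{n/4}.$$
Hence we may assume $G$ is transitive of degree $n$ with $1 \leq n \leq 23$; the cases $n \leq 3$ are trivial, so assume $4 \leq n \leq 23$.

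If $G$ is primitive, then either $G \in \{A_n, S_n\}$, in which case $k(G) \leq k(S_n) = p(n)$, or Theorem \ref{subprim} gives $k(G) \leq p(n)$. In either situation it is enough to note the numerical inequality $p(n)^{1/n} \leq 5^{1/4}$, which holds for all $n \geq 1$ since an inspection of the sequence $p(m)^{1/m}$ shows it attains its maximum $5^{1/4}$ at $m = 4$. Thus $p(n) \leq 5^{n/4}$ in the required range.

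The main obstacle is the imprimitive transitive case. A single application of Theorem \ref{part} to a chain built from a block system is typically insufficient: for instance, with $n = 8$ and chain $(a_1, a_2) = (2,4)$ or $(4,2)$ the resulting sum $\sum_i \log p(a_i)/(a_1 \cdots a_i)$ strictly exceeds $(\log 5)/4$, so the theorem alone yields an exponent $\alpha > 2$ rather than the required bound $k(G) \leq 5^{2}$. To conclude, I would appeal to the library of transitive permutation groups of degree at most $23$ in \cite{GAP} and verify $k(G) \leq 5^{n/4}$ for each such $G$ individually. The list is finite and explicitly tabulated, so this (bulky but mechanical) verification completes the proof.
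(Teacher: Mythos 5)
Your proof is correct and follows essentially the same route as the paper: reduce to the transitive case by induction via the kernel of the action on one orbit, then verify the bound for the (finitely many, tabulated) transitive groups of degree at most $23$ by \cite{GAP}. Your theoretical treatment of the primitive subcase via Theorem \ref{subprim} and the inequality $p(n) \leq 5^{n/4}$ is a pleasant refinement, but since you still fall back on the computer check for the imprimitive transitive groups, the substance of the argument is the same as the paper's.
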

\begin{proof}
By induction on $n$, as in Lemma 3.1 of \cite{Ma3}, we may assume that $G$ is transitive. For transitive groups the claim can be checked by \cite{GAP}. 
\end{proof}

By \cite{hulpke} all transitive permutation groups of degree at most $30$ are known therefore the $23$ in Lemma \ref{l3} could perhaps be replaced by $30$ (or even $31$) but it is not clear to what extent this possible improvement could be of help.

Now we proceed to the proof of Theorem \ref{main}. Many of the computations below have been performed by \cite{GAP}, but we will not point this out in all cases.

Let $G$ be as in the statement of the theorem. It acts faithfully on a set $\Omega$ of size $n$.

We proceed by induction on $n$. By Lemma \ref{l3} we can assume that $n \geq 24$. Suppose $G$ is intransitive and let $O$ be a nontrivial orbit of $G$ of size $1 < r < n$. Let $N$ be the kernel of the action of $G$ on $O$. Then $N$ acts faithfully on $n-r$ points and $G/N$ acts faithfully on $r$ points hence if $r,n-r \geq 4$ then $$k(G) \leq k(N) \cdot k(G/N) \leq 5^{(n-r-1)/3} \cdot 5^{(r-1)/3} < 5^{(n-1)/3}.$$ If $r \leq 3$ then $k(G/N) \leq r$, and if $n-r \leq 3$ then $k(N) \leq n-r$, from which the result follows likewise. Hence we may assume that $G$ is transitive.

Let $H$ be the stabilizer of $\alpha \in \Omega$ in $G$. If $H$ is maximal in $G$ then $G$ is a primitive permutation group and thus by Theorem \ref{part} and Lemma \ref{erdos} we have $k(G) \leq p(n) \leq e^{\pi \sqrt{2n/3}}$ and this is at most $5^{(n-1)/3}$ for $n \geq 25$.

So assume that $H$ is not maximal in $G$ and let $K$ be such that $H < K < G$. Let $a := |K:H|$ and $b := |G:K|$. Notice that the $K$-orbit $\Delta$ containing $\alpha$ is a block of imprimitivity for the action of $G$. Let $B$ be the kernel of the action of $G$ on the block system $\Sigma$ associated to $\Delta$, in other words, $B$ is the normal core of $K$ in $G$. $G/B$ is a transitive permutation group of degree $b$. By taking subsequent kernels on the blocks (i.e. arguing as in the proof of Theorem \ref{part}) we find a subnormal sequence $B_0=B \unrhd B_1 \unrhd \ldots \unrhd B_b = \{1\}$ such that each factor group $B_i/B_{i+1}$ can be considered as a permutation group of degree $a$.

If $a$ and $b$ are both at least $4$ then we may apply induction and find $$k(G) \leq k(B) \cdot k(G/B) \leq (5^{(a-1)/3})^{b} \cdot 5^{(b-1)/3} = 5^{(n-1)/3}.$$So we may assume that whenever $H < L < G$ either $|G:L| \leq 3$ or $|L:H| \leq 3$.

If both $a$ and $b$ are at most $3$ then $n \leq 9$ and the result follows from Lemma \ref{l3}. Assume that $4 \leq a \leq 23$ and $b \leq 3$. Then $k(G/B) \leq 3$ hence since the orbits of $B$ have all size at most $23$ by Lemma \ref{l3} we have $k(G) \leq k(B) k(G/B) \leq 5^{n/4} \cdot 3$ which is at most $5^{(n-1)/3}$ since $n \geq 24$.

We are in one of the following cases.

\begin{enumerate}
\item $H$ is maximal in $K$ and $b = |G:K| \in \{2,3\}$, $a \geq 24$ (consider the block system associated to $K$).
\item $K$ is maximal in $G$ and $a = |K:H| \in \{2,3\}$.
\item There exists a subgroup $L < G$ such that $H < K < L < G$ with $K$ maximal in $L$, $a=|K:H| \in \{2,3\}$, $c=|G:L| \in \{2,3\}$, and $q=|L:K| \geq 24/a$ (consider the block system associated to $L$).
\end{enumerate}

We consider the cases separately. In the following ``filtration argument'' refers to the argument used in the proof of Theorem \ref{part}. If $B \leq A$ are subgroups of $G$, by ``filtration associated to $A$ and $B$'' we mean the filtration of the kernel of the action of $A$ on the system of blocks associated to $B$ obtained as in the proof of Theorem \ref{part}.

Case 1. By Theorem \ref{part}, since $p(b) \leq b$ we have $k(G) \leq p(a)^b b$. Thus it is sufficient to show that $p(a)^b b \leq 5^{(ab-1)/3}$, i.e. $p(a) \leq ((5^{(ab-1)/3})/b)^{1/b}$. For this it is sufficient to show that $p(a) \leq ((5^{(2a-1)/3})/3)^{1/3}$ for $a \geq 24$. If $a \geq 55$ this follows from the bound $p(a) \leq e^{\pi \sqrt{2n/3}}$ (Lemma \ref{erdos}), and if $24 \leq a \leq 54$ it follows by inspection.

Case 2. In this case $G/B$ is a primitive group of degree $b$. Applying the filtration argument used in the proof of Theorem \ref{part}, since $p(a) \leq a$ we find $k(G) \leq a^b k(G/B)$ and it is enough to prove that $a^b k(G/B) \leq 5^{(ab-1)/3}$, i.e. (*) $k(G/B) \leq ((5^{(ab-1)/3})/a^b) = (5^{(a-1/b)/3}/a)^b$. Recall that $ab = n \geq 24$. If $a = 3$ then $b \geq 8$, now $p(b) \leq (5^{(3-1/8)/3}/3)^b$ follows from the bound $p(b) \leq e^{\pi \sqrt{2b/3}}$ (Lemma \ref{erdos}) if $b \geq 34$ and by inspection if $8 \leq b \leq 33$. Suppose now $a = 2$, so that $b \geq 12$. If $b=12$ let $S$ be a block stabilizer, then $|G:S|=b$ and $S$ is a permutation group on $24$ points having at least $2$ orbits hence by Lemma \ref{l3} we have $k(G) \leq 12 \cdot k(S) \leq 12 \cdot 5^{6}$ and this is less than $5^{23/3}$. Let $b \in \{13,14,15\}$. Then using the fact that any primitive group of degree $b$ different from $S_b$ has at most $k(A_b)$ conjugacy classes we see that (*) holds unless $G/B \cong S_b$. If $B$ is not elementary abelian of rank $b$ then the filtration argument implies $k(G) \leq a^{b-1} k(G/B) \leq 5^{(ab-1)/3}$. So assume that $B \cong C_2^b$ and $G/B \cong S_b$. Then by the Clifford-Gallagher formula (Lemma \ref{cliffordgallagher}) $k(G) \leq k(C_2 \wr S_b)$ which is at most $5^{(n-1)/3}$ by \cite{GAP}. If $16 \leq b \leq 55$ then (*) holds by inspection using $k(G/B) \leq p(b)$, and if $b \geq 56$ it follows from the bound $p(b) \leq e^{\pi \sqrt{2b/3}}$ (Lemma \ref{erdos}).

Case 3. By Theorem \ref{part}, since $p(a) \leq a$ and $p(c) \leq c$ we have $k(G) \leq a^b p(q)^c c$ where $b=qc$. We want to prove that $k(G) \leq 5^{(n-1)/3}$ where $n = ab = aqc$. If $a=3$ then it is sufficient to prove that $3^b p(q)^c c \leq 5^{(aqc-1)/3}$ for $q \geq 8$. Raising both sides to the power $1/c$ and rearranging, using the fact that $c^{1/c} \leq 1.5$ we see that it is sufficient to prove that $p(q) \leq \frac{1}{1.5} (5^{\frac{1}{3} (3-1/16)}/3)^q$ for $q \geq 8$. If $q \geq 31$ this follows from the bound $p(q) \leq e^{\pi \sqrt{2q/3}}$ (Lemma \ref{erdos}), and the case $8 \leq q \leq 30$ is checked by inspection.

Now assume that $a=2$ and $q \geq 16$. We prove that (**) $2^{cq} \cdot p(q)^c \cdot c \leq 5^{(2cq-1)/3}$. Raising both sides of (**) to the power $1/c$ and rearranging we see that it is enough to prove that $p(q) \leq \frac{1}{1.5} (5^{\frac{1}{3} (2-1/32)}/2)^q$, and for this it is enough to prove that $p(q) \leq \frac{1}{1.5} (1.43)^q$. If $q \geq 60$ this follows from the bound $p(q) \leq e^{\pi \sqrt{2q/3}}$ (Lemma \ref{erdos}), and if $16 \leq q \leq 59$ inequality (**) can be checked by inspection.

Now assume that $a=2$ and either $13 \leq q \leq 15$ or $(q,c)=(12,3)$. Every nontrivial subnormal subgroup of any primitive group of degree $q$ is a primitive group of degree $q$, a primitive group of degree $q$ which is not the full symmetric group $S_q$ has at most $k(A_q)$ conjugacy classes, and we have $k(A_{12})=43$, $k(A_{13})=55$, $k(A_{14})=72$, $k(A_{15})=94$. Moreover, the ratio $5^{(n-1)/3}/(2^{cq} \cdot p(q)^c \cdot c)$ is less than $2$. Thus we may assume that the kernel of the action of $G$ on the system of blocks associated to the primitive group $K/H_K$ is a direct product $C_2^{cq} = C_2^b$, indeed if this is not the case then using the filtration argument we see that $k(G) \leq 2^{cq-1} \cdot p(q)^c \cdot c \leq 5^{(n-1)/3}$. Consider the filtration $\mathcal{F}_1$ associated to $L$ and $K$. The two factors of this filtration are isomorphic to subnormal subgroups of the primitive group $L/K_L$ of degree $q$. Consider the filtration $\mathcal{F}_2$ associated to $L$ and $H$. By the Clifford-Gallagher formula (Lemma \ref{cliffordgallagher}) a fixed factor of $\mathcal{F}_2$ has at most $k(S_2 \wr A)$ conjugacy classes, where $A$ is a permutation group of degree $q$ isomorphic to a factor of $\mathcal{F}_1$. If no factor of $\mathcal{F}_1$ is isomorphic to $S_q$ then it is enough to show that $c \cdot k(A_q)^c \cdot 2^{cq} \leq 5^{(n-1)/3}$ which is true, and if there is a factor of $\mathcal{F}_1$ isomorphic to $S_q$ then since $k(S_2 \wr S_{13}) = 1770$, $k(S_2 \wr S_{14}) = 2665$ and $k(S_2 \wr S_{15}) = 3956$ by the Clifford-Gallagher formula (Lemma \ref{cliffordgallagher}) it is enough to show that $c \cdot k(S_2 \wr S_q) \cdot 2^{q(c-1)} \cdot p(q)^{c-1} \leq 5^{(n-1)/3}$ which is true.

Now assume that $(a,q,c) = (2,12,2)$. $K$ is the stabilizer of a block of size $2$ (there are $24$ such blocks). It acts on the $24$ points of a block system consisting of $12$ blocks of size $2$ intransitively, hence if $N$ denotes the kernel of this action we deduce $k(K/N) \leq 5^{24/4} = 5^6$. Now look at the (faithful) action of $N$ on the remaining $24$ points. If this action is intransitive then $k(N) \leq 5^{24/4}$ by Lemma \ref{l3}. If it is transitive then there is an induced transitive action of $N$ on the second block system of twelve blocks of size $2$. Since any transitive group of degree $12$ has at most $p(12)=77$ conjugacy classes (by \cite{GAP}), by Theorem \ref{part} we deduce $k(N) \leq 2^{12} \cdot 77$ and even $k(N) \leq 2^{11} \cdot 77$, in which case $k(G) \leq |G:K| \cdot k(K/N) \cdot k(N) \leq 24 \cdot 5^6 \cdot 2^{11} \cdot 77 \leq 5^{47/3}$, unless the kernel of the action of $N$ on the 12 blocks of size $2$ is a full direct product $C_2^{12}$. Suppose this is the case. Let $R$ be the kernel of the transitive action of $N$ on the twelve blocks of size $2$ of the second block system. If $k(N/R) \not \in \{65, 77\}$ then $k(N/R) \leq 55$ and $k(G) \leq |G:K| \cdot k(K/N) \cdot k(N) \leq 24 \cdot 5^6 \cdot 2^{12} \cdot 55 \leq 5^{47/3}$, so now assume $k(N/R) \in \{65,77\}$. It can be checked by \cite{GAP} that $k(S_2 \wr N/R) \in \{1165, 1265, 1960, 2210\}$. By the Clifford-Gallagher formula (Lemma \ref{cliffordgallagher}), $k(N) \leq k(S_2 \wr N/R) \leq 2210$ hence $k(G) \leq |G:K| \cdot k(K/N) \cdot k(N) \leq 24 \cdot 5^{6} \cdot 2210 \leq 5^{47/3}$.

\end{document}